\newcommand{\Natural}{\mathbb N}
\newcommand{\Real}{\mathbb R}
\newcommand{\set}[1]{\left\{#1\right\}}
\newcommand{\diam}{\mathop{\mathrm{diam}}\nolimits}
\newcommand{\dist}{\mathop{\mathrm{dist}}\nolimits}
\newcommand{\rng}{\mathop{\mathrm{rng}}\nolimits}
\newcommand{\dent}{\mathop{\mathrm{Dz}}\nolimits}
\newcommand{\szlenk}{\mathop{\mathrm{Sz}}\nolimits}
\newcommand{\norm}[1]{\left\Vert#1\right\Vert}
\newcommand{\duality}[1]{\left\langle#1\right\rangle}
\newcommand{\ws}{weak$^*$}
\def\cal{\mathcal}
\def\eps{\varepsilon}
\theoremstyle{plain}
\newtheorem{thm}{Theorem}
\newtheorem{cor}[thm]{Corollary}
\newtheorem{lem}[thm]{Lemma}
\newtheorem{prop}[thm]{Proposition}
\theoremstyle{definition}
\newtheorem{defn}[thm]{Definition}
\begin{document}

\title{Weak$^*$ dentability index of spaces $C([0,\alpha])$}
\author{Petr H\'ajek}
\address{Mathematical Institute\\Czech Academy of Science\\\v Zitn\'a 25\\115 67 Praha 1\\Czech Republic}
\email{hajek@math.cas.cz}
\author{Gilles Lancien}
\address{Universit\' e de Franche Comt\' e, Besan\c con\\ 16, Route de Gray\\ 25030 Besan\c con Cedex\\ France}
\email{gilles.lancien@univ-fcomte.fr}
\author{Anton\'\i n Proch\'azka}
\address{Charles University\\Sokolovsk\'a 83\\186 75 Praha 8\\Czech Republic
and  Universit\'e Bordeaux 1, 351 cours de
la liberation, 33405, Talence, France.} \email{protony@karlin.mff.cuni.cz}

\date{July 2008}
\thanks{Supported by grants: Institutional Research Plan AV0Z10190503,  A100190801, GA \v CR 201/07/0394}
\keywords{Szlenk index, dentability index} \subjclass[2000]{46B20, 46B03, 46E15.}

\begin{abstract}
We compute the weak$^*$-dentability index of the spaces $C(K)$ where $K$ is a
countable compact space. Namely $\dent(C([0,\omega^{\omega^\alpha}])) =
\omega^{1+\alpha+1}$, whenever $0\le\alpha<\omega_1$. More generally, $\dent(C(K))=\omega^{1+\alpha+1}$ if $K$ is a scattered compact whose height $\eta(K)$ satisfies $\omega^\alpha<\eta(K)\leq \omega^{\alpha+1}$ with an $\alpha$ countable.
\end{abstract}

\maketitle

\section{Introduction}
The Szlenk index has been introduced in \cite{Sz} in order to show that there
is no universal space for the class of separable reflexive Banach spaces. The
general idea of assigning an isomorphically invariant ordinal index to a class
of Banach spaces proved to be extremely fruitful in many situations. We refer
to \cite{O} for a survey with references. In the present note we will give an
alternative geometrical description of the Szlenk index  (equivalent to the
original definition whenever $X$ is a separable Banach space not containing any
isomorphic copy of $\ell_1$ \cite{La0}), which stresses its close relation to
the weak$^*$-dentability index. The later index proved to be very useful in
renorming theory (\cite{La0}, \cite{La1}, \cite{La2}).

Let us proceed by giving the precise definitions. Consider a real Banach space
$X$ and $K$ a weak$^*$-compact subset of $X^*$. For $\eps>0$ we let $\cal V$ be
the set of all relatively weak$^*$-open subsets $V$ of $K$ such that the norm
diameter of $V$ is less than $\eps$ and $s_{\eps}K=K\setminus \bigcup\{V:V\in\cal
V\}.$ Then we define inductively $s_{\eps}^{\alpha}K$ for any ordinal $\alpha$
by $s^{\alpha+1}_{\eps}K=s_{\eps}(s_{\eps}^{\alpha}K)$ and
$s^{\alpha}_{\eps}K={ \bigcap_{\beta<\alpha}}s_{\eps}^{\beta}K$ if
$\alpha$ is a limit ordinal. We denote by $B_{X^*}$ the closed unit ball of
$X^*$. We then define $\text{Sz}(X,\eps)$ to be the least ordinal $\alpha$ so
that $s_{\eps}^{\alpha}B_{X^*}=\emptyset,$ if such an ordinal exists. Otherwise
we write $\text{Sz}(X,\eps)=\infty.$ The {\it Szlenk index} of $X$ is finally
defined by $\text{Sz}(X)=\sup_{\eps>0}\text{Sz}(X,\eps)$. Next, we introduce
the notion of {\it weak$^*$-dentability index}. Denote $H(x,t)=\{ x^*\in K,\
x^*(x)>t\},$ where $x\in X$ and $t\in \mathbb{R}$. Let $K$ be again a
weak$^*$-compact. We introduce a weak$^*$-slice of $K$ to be any non empty set
of the form $H(x,t)\cap K$ where $x\in X$ and $t\in \mathbb{R}$.
 Then we denote by $\cal
S$ the set of all weak$^*$-slices of $K$ of norm diameter less than $\eps$ and
$d_{\eps}K=K\setminus \bigcup\{S:S\in\cal S\}.$ From this derivation, we define
inductively $d_{\eps}^{\alpha}K$ for any ordinal $\alpha$ by
$d^{\alpha+1}_{\eps}K=s_{\eps}(d_{\eps}^{\alpha}K)$ and
$d^{\alpha}_{\eps}K={ \bigcap_{\beta<\alpha}}s_{\eps}^{\beta}K$ if
$\alpha$ is a limit ordinal. We then define $\dent(X,\eps)$ to be the least
ordinal $\alpha$ so that $d_{\eps}^{\alpha}B_{X^*}=\emptyset,$ if such an
ordinal exists. Otherwise we write $\dent(X,\eps)=\infty.$ The
weak$^*$-dentability index is defined by
$\dent(X)=\sup_{\eps>0}\dent(X,\eps).$

Let us now recall that it follows from the classical theory of Asplund spaces
(see for instance \cite{HMVZ}, \cite{HLM}, \cite{DGZ} and references therein)
that for a Banach space $X$, each of the following conditions:
$\text{Dz}(X)\neq \infty$ and $\text{Sz}(X)\neq \infty$ is equivalent to $X$
being an Asplund space. In particular, if $X$ is a separable Banach space, each
of the conditions $\text{Dz}(X)<\omega_1$ and $\text{Sz}(X)<\omega_1$ is
equivalent to the separability of $X^*$. In other words, both of these indices
measure ``quantitatively" the ``Asplundness" of the space in question.
Moreover, these indices are invariant under isomorphism.

It is immediate from the definition, that $\text{Dz}(X)\ge\text{Sz}(X)$ for
every Banach space $X$. Relying on tools from descriptive set theory, Bossard
(for the separable case, see \cite{Bo1} and \cite{Bo2}) and the second named
author (\cite{La2}), proved non-constructively that there exists a universal
function $\psi:\omega_1 \to \omega_1$, such that if $X$ is an Asplund space
with $\text{Sz}(X)<\omega_1$, then $\text{Dz}(X)\leq \psi(\text{Sz}(X))$.

Recently, Raja~\cite{R} has obtained a concrete example of such a $\psi$, by
showing that $\dent(X) \leq \omega^{\szlenk(X)}$ for every Asplund space. This
is a very satisfactory result, but it is not optimal, as we know from \cite{HL}
that the optimal value $\psi(\omega)=\omega^2$. Further progress in this area
depends on the exact knowledge of indices for concrete spaces. The Szlenk index
has been precisely calculated for several classes of spaces, most notably for
the class of $C([0,\alpha])$, $\alpha$ countable (Samuel \cite{Sa}, see also
\cite{HL}). We have $\szlenk(C([0,\omega^{\omega^\alpha}])) =
\omega^{\alpha+1}$, so it follows from the Bessaga-Pe\l czy\'nski (\cite{BP})
Theorem 1 below, that the value
 of the Szlenk index characterizes the isomorphism class (\cite{HMVZ}).
Computations of the Szlenk index for other spaces may be found e.g. in
\cite{AJO}, \cite{Al}, \cite{KOS}. On the other hand, the precise value of the
weak$^*$-dentability index is known only for superreflexive Banach spaces,
where $\text{Dz}(X)=\omega$ (\cite{La1}, \cite{HMVZ}), and for spaces with an
equivalent UKK$^*$ renorming (\cite{HL}). For a detailed background information
on the Szlenk and dentability indices we refer the  reader to \cite{HMVZ},
\cite{L}, \cite{O}, \cite{Ro} and references therein.

The main result of our note, Theorem~\ref{t:main}, is a precise evaluation of
the $w^*$-dentability index for the class of $C([0,\alpha])$, $\alpha$
countable. These spaces  have been classified isomorphically by C. Bessaga and
A. Pe\l czy\'nski \cite{BP} in the following way.

\begin{thm}\label{l:class}(Bessaga-Pe\l czy\'nski)
Let $\omega \leq \alpha \leq \beta < \omega_1$. Then $C([0,\alpha])$ is
isomorphic to $C([0,\beta])$ if and only if $\beta < \alpha^\omega$. Moreover,
for every countable compact space $K$ there exists a unique $\alpha<\omega_1$
such that $C(K)$ is isomorphic to $C([0,\omega^{\omega^\alpha}])$.
\end{thm}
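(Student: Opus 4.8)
The plan is to prove the isomorphism criterion in both directions and then read off the normal form, using throughout the Mazurkiewicz--Sierpi\'nski homeomorphic classification of countable compacta: every infinite countable compact metric space is homeomorphic to an ordinal interval $[0,\omega^\gamma\cdot m]$ for a unique countable $\gamma\geq 1$ and finite $m\geq 1$, the pair $(\gamma,m)$ being recovered from the Cantor--Bendixson derivatives. Since any countable compact Hausdorff space is scattered and metrizable, this reduces the whole statement to ordinal intervals. The prospective isomorphism class will depend only on the range $[\omega^{\omega^\xi},\omega^{\omega^{\xi+1}})$ containing $\alpha$, and here a short ordinal computation is the organizing fact: for every exponent $\gamma\in[\omega^\xi,\omega^{\xi+1})$ one has $\gamma\cdot\omega=\omega^{\xi+1}$ (as $\omega^{\xi+1}$ is closed under finite multiplication), whence $\alpha^\omega=\omega^{\omega^{\xi+1}}$. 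Thus, for $\omega\le\alpha\le\beta$, the condition $\beta<\alpha^\omega$ says exactly that $\alpha$ and $\beta$ lie in the same range.

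For the \emph{only if} direction (non-isomorphism) I would invoke an isomorphically invariant ordinal index that separates the ranges, the natural choice being the Szlenk index, computed directly on the intervals (Samuel): $\szlenk(C([0,\alpha]))=\omega^{\xi+1}$ whenever $\omega^{\omega^\xi}\le\alpha<\omega^{\omega^{\xi+1}}$. If $\beta\ge\alpha^\omega=\omega^{\omega^{\xi+1}}$, then $\beta$ sits in a strictly higher range, so $\szlenk(C([0,\beta]))\ge\omega^{\xi+2}>\omega^{\xi+1}=\szlenk(C([0,\alpha]))$; since $\szlenk$ is invariant under isomorphism, $C([0,\alpha])\not\cong C([0,\beta])$.

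For the \emph{if} direction I would argue by induction on $\xi$, showing that all $C([0,\beta])$ with $\beta\in[\omega^{\omega^\xi},\omega^{\omega^{\xi+1}})$ are isomorphic to the representative $R_\xi:=C([0,\omega^{\omega^\xi}])$, combining three homeomorphisms of ordinal intervals with Pe\l czy\'nski's decomposition method. At the level of compacta one has $[0,\omega^\gamma]\sqcup[0,\omega^\gamma]\cong[0,\omega^\gamma\cdot 2]$ (which absorbs the finite multiplicity $m$), the one-point compactification identity $\bigl(\bigsqcup_n[0,\omega^\gamma]\bigr)\cup\{\infty\}\cong[0,\omega^{\gamma+1}]$ (so that $\bigl(\sum_n C([0,\omega^\gamma])\bigr)_{c_0}\cong C([0,\omega^{\gamma+1}])$, up to a one-dimensional summand), and the product identity $[0,\omega^\gamma]\times[0,\omega^\gamma]\cong[0,\omega^{\gamma\cdot 2}]$. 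In the base range $\xi=0$ everything collapses cleanly, since $R_0=c_0$ genuinely satisfies $c_0\cong\bigl(\sum_n c_0\bigr)_{c_0}$, whence $C([0,\omega^k])\cong c_0$ by induction on $k$. For the inductive step, the clopen initial-segment embeddings give, for $\gamma\le\gamma'$ in the same range, that $C([0,\omega^\gamma])$ is complemented in $C([0,\omega^{\gamma'}])$ and, since $\gamma'<\gamma\cdot\omega$ forces $\gamma'\le\gamma\cdot n$ for some $n$, that $C([0,\omega^{\gamma'}])$ is complemented in $C([0,\omega^{\gamma\cdot n}])$; Pe\l czy\'nski's method then upgrades these two-sided complementations to an isomorphism, once $R_\xi$ is known to reproduce itself as a $c_0$-sum, $R_\xi\cong\bigl(\sum_n R_\xi\bigr)_{c_0}$.

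The main obstacle, and the real content, is precisely this self-reproduction within a range; it can be packaged as $R_\xi\cong\bigl(\sum_n R_\xi\bigr)_{c_0}$ or as the exponent identities $C([0,\omega^\gamma])\cong C([0,\omega^{\gamma+1}])\cong C([0,\omega^{\gamma\cdot 2}])$ for $\gamma$ in the range. The difficulty is that the homeomorphism identities above only match the two sides after one already knows that the classes of the constituent pieces coincide, so the exponent-incrementing and exponent-doubling isomorphisms cannot be read off directly and must instead be produced by a single decomposition-method argument run simultaneously across the whole range (this is what fails, and must be bypassed, in the base case, where $c_0$ is honestly self-similar). Once this is in hand the \emph{if} direction closes, and the \emph{moreover} statement follows immediately: an arbitrary countable compact $K$ gives $C(K)\cong C([0,\alpha])$ with $\alpha$ countable, the criterion assigns it the unique $\xi$ with $\omega^{\omega^\xi}\le\alpha<\omega^{\omega^{\xi+1}}$ so that $C(K)\cong R_\xi=C([0,\omega^{\omega^\xi}])$, and uniqueness of $\xi$ is guaranteed by the value $\omega^{\xi+1}$ of the Szlenk index.
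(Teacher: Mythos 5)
This statement is background in the paper: it is quoted from Bessaga--Pe\l czy\'nski \cite{BP} without proof, so your attempt can only be measured against the classical argument itself. Your skeleton is the right one (Mazurkiewicz--Sierpi\'nski reduction to ordinal intervals, the range computation $\alpha^\omega=\omega^{\omega^{\xi+1}}$, an ordinal invariant for the \emph{only if} direction, Pe\l czy\'nski decomposition for the \emph{if} direction), and the Szlenk-index half is sound and non-circular provided the \emph{if} half is established first: Samuel's computation \cite{Sa} at the representatives $\omega^{\omega^\xi}$ then propagates to the whole range by isomorphic invariance. The gap is in the \emph{if} half. You correctly isolate the self-reproduction $R_\xi\cong\bigl(\sum_n R_\xi\bigr)_{c_0}$ as ``the real content'', but you do not prove it, and the mechanism you offer --- ``a single decomposition-method argument run simultaneously across the whole range'' --- cannot produce it: the decomposition method needs an anchor already known to be isomorphic to its own $c_0$-sum, so running it across the range is circular, and the naive telescoping of your one-point-compactification identity fails for a quantitative reason, namely $\bigl(\sum_n X_n\bigr)_{c_0}\cong\bigl(\sum_n Y_n\bigr)_{c_0}$ requires the coordinate isomorphisms to have uniformly bounded constants, whereas Banach--Mazur distances along any sequence cofinal in a range must blow up (otherwise that identity would push the isomorphism class past the top of the range, contradicting the \emph{only if} half).

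The missing idea is purely topological, and it is exactly where additive indecomposability enters. Since $\omega^{\omega^\xi}$ is additively indecomposable of cofinality $\omega$, the \emph{locally compact} space $[0,\omega^{\omega^\xi})$ is homeomorphic to $\bigsqcup_{n<\omega}[0,\omega^{\omega^\xi})$: tails $(\delta,\omega^{\omega^\xi})$ are again copies, and the compact clopen pieces of a partition can be split and redistributed. Hence $C_0([0,\omega^{\omega^\xi}))\cong\bigl(\sum_n C_0([0,\omega^{\omega^\xi}))\bigr)_{c_0}$ \emph{isometrically}, which gives $R_\xi\cong\bigl(\sum_n R_\xi\bigr)_{c_0}$ with no circularity and no uniformity problem; the height obstruction you worried about disappears once the top point is removed. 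A second, related repair: your product identity $[0,\omega^\gamma]\times[0,\omega^\gamma]\cong[0,\omega^{\gamma\cdot 2}]$ is false for general $\gamma$, because Cantor--Bendixson ranks add by \emph{natural} sum; for $\gamma=\omega+1$ the product is $[0,\omega^{\omega\cdot 2+2}]$, not $[0,\omega^{\omega\cdot 2+1}]$. It is valid precisely at the anchors $K_\xi=[0,\omega^{\omega^\xi}]$, where $K_\xi^{\,n}\cong[0,\omega^{\omega^\xi\cdot n}]$ and, since natural sums of exponents below $\omega^\xi$ stay below $\omega^\xi$, products of proper clopen pieces of $K_\xi$ are again homeomorphic to proper clopen pieces; this is what yields the missing big-into-small complementation of $C(K_\xi^{\,n})$, and hence of every $C([0,\beta])$ with $\beta<\omega^{\omega^{\xi+1}}$, inside $\bigl(\sum_k C(K_\xi)\bigr)_{c_0}\cong C(K_\xi)$. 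With these two facts your Pe\l czy\'nski step closes; without them, the outline assumes exactly what is to be proved.
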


It is also well-known and easy to show that for $\alpha\ge \omega$,
$C([0,\alpha])$ is isomorphic to $C_0([0,\alpha])$ where
$C_0([0,\alpha])=\set{f\in C([0,\alpha]):f(\alpha)=0}$. The aim of
this note is to prove the next theorem. Note, as a particular
consequence, that the weak$^*$-dentability index gives a complete
isomorphic characterization of a $C(K)$ space, when $K$ is a
metrizable compact space (similarly to the case of the Szlenk
index).

\begin{thm}\label{t:main}
Let $0 \leq \alpha <\omega_1$. Then $\dent(C([0,\omega^{\omega^\alpha}])) =
\omega^{1+\alpha+1}$.
\end{thm}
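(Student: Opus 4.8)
The plan is to prove the two inequalities
$$\dent(C([0,\omega^{\omega^\alpha}])) \le \omega^{1+\alpha+1} \quad\text{and}\quad \dent(C([0,\omega^{\omega^\alpha}])) \ge \omega^{1+\alpha+1}$$
separately, since they require genuinely different techniques. For the **upper bound**, the natural route is to exploit Raja's result $\dent(X)\le\omega^{\szlenk(X)}$ together with Samuel's computation $\szlenk(C([0,\omega^{\omega^\alpha}]))=\omega^{\alpha+1}$, which would give $\dent\le\omega^{\omega^{\alpha+1}}$—but this is far too crude. So instead I would aim for a sharper direct estimate of the weak$^*$-slice derivations $d_\eps^\xi B_{X^*}$. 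The unit ball of $C([0,\alpha])^*$ is the space of measures $M([0,\alpha])$ with total variation at most one; since $[0,\alpha]$ is scattered and countable, every such measure is purely atomic, so $B_{X^*}$ can be identified with a weak$^*$-compact subset of $\ell_1([0,\alpha])$. I would set up an inductive scheme keyed to the Cantor–Bendixson derivatives $[0,\alpha]^{(\xi)}$ of the index set, showing that after roughly $\omega\cdot\xi$ slice-derivations one removes all mass supported on the $\xi$-th derived set (at the relevant scale $\eps$). Controlling how the ordinal index of the derivation compounds—the crucial bookkeeping being that the scale $\eps$ must be \emph{halved} at each Cantor–Bendixson level, producing the extra factor of $\omega$ and ultimately the exponent $1+\alpha+1$—is what yields the bound $\dent(X,\eps)\le\omega^{1+\alpha+1}$ uniformly in $\eps$.

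For the **lower bound** I would construct, for each $\eps$ (say $\eps=2$) and each ordinal $\xi<\omega^{1+\alpha+1}$, an explicit element of $d_\eps^\xi B_{X^*}$, thereby forcing the derivation not to terminate before stage $\omega^{1+\alpha+1}$. The building blocks are the point-mass measures $\pm\delta_\gamma$ and their signed combinations; a convex combination $\sum \pm\delta_{\gamma_i}$ sitting on a long ordered tree of indices will survive many slice-derivations because any weak$^*$-slice of large such a configuration must contain two far-apart points. I would organize these configurations along a transfinite tree whose branching structure mirrors the ordinal $\omega^{1+\alpha+1}$, using the order topology of $[0,\omega^{\omega^\alpha}]$ to guarantee the requisite weak$^*$-approximation (a functional $f\in C([0,\alpha])$ separating finitely many ordinals exists by continuity at limit points). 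Establishing by transfinite induction that these tree-measures persist through the slice-derivation is the technical heart of the lower bound.

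**The main obstacle**, and the reason the result is delicate rather than routine, is that the weak$^*$-dentability index uses \emph{slices} (half-spaces $H(x,t)\cap K$) rather than the relatively weak$^*$-open sets used by the Szlenk index. Slices are a strictly poorer supply of small sets—a point can fail to be dented away even when it is Szlenk-derivable—so the dentability derivation proceeds strictly more slowly, and this gap is precisely what turns the exponent $\alpha+1$ of the Szlenk index into $1+\alpha+1$ for dentability. Quantifying this slowdown demands a careful analysis of which configurations of atomic measures admit slices of small diameter; in particular, for the upper bound one must show that any measure with mass spread across a Cantor–Bendixson level can be peeled off by a slice only after exhausting lower levels, and for the lower bound one must certify that the tree-measures genuinely resist all slices. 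Reconciling the slice geometry with the Cantor–Bendixson stratification of $[0,\omega^{\omega^\alpha}]$—and tracking the ordinal arithmetic so that both bounds meet exactly at $\omega^{1+\alpha+1}$—is where the real work lies.
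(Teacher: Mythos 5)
The upper bound is where your proposal has a genuine gap. You rightly reject Raja's bound $\dent(X)\le\omega^{\szlenk(X)}$ as too crude, but the replacement you propose---a direct analysis of the slice derivation $d_\eps^\xi(B_{X^*})$ stratified by the Cantor--Bendixson levels of $[0,\omega^{\omega^\alpha}]$---comes with no mechanism, and the scheme as stated conflicts with known behaviour. The ordinal at which an element dies under slicing is not governed by the Cantor--Bendixson level of its support: already for $K=[0,\omega]$ (so $C(K)$ isomorphic to $c_0$, with only two levels), the roots $r_n=(2^{-n},\dots,2^{-n},0,\dots)$ of $(n,1)$-trees are supported entirely on isolated points yet lie in $d^{2n}_{1/2}(B_{\ell_1([0,\omega])})$, so ``level-$0$ mass'' survives cofinally below $\omega$; and the index is $\omega^2$, which a level-by-level peeling over two levels cannot account for. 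Nor can the extra factor of $\omega$ come from ``halving $\eps$ at each level'': for $[0,\omega]$ there are at most two halvings, while for $\alpha\ge 1$ the compact $[0,\omega^{\omega^\alpha}]$ has infinitely many levels and a derivation at a \emph{fixed} $\eps$ cannot absorb infinitely many halvings, so no bound uniform in $\eps$ emerges. The paper sidesteps exactly this difficulty by never slicing $B_{C(K)^*}$ directly: it uses Lemma~\ref{l:szlenkL2}, $\dent(X)\le\szlenk(L_2([0,1],X))$, to trade slices for weak$^*$-open sets in the Bochner space, then proves $\szlenk(L_2(C([0,\omega^{\omega^\alpha}])))\le\omega^{1+\alpha+1}$ by induction on $\alpha$ using norm-one projections and Hahn--Banach separation (Lemma~\ref{l:subspaces}); the final factor $\omega$ comes not from rescaling $\eps$ but from the homogeneity Lemma~\ref{l:homog}, and the passage from $\omega^{\omega^\alpha}$ to all $\gamma<\omega^{\omega^{\alpha+1}}$ from the Bessaga--Pe\l czy\'nski Theorem~\ref{l:class}. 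Without this reduction, or a genuine substitute for it, your upper bound does not get off the ground.

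Your lower-bound plan, by contrast, is the paper's strategy in outline: the paper also works with signed point masses in $\ell_1([0,\omega^{\omega^\alpha}])$, dyadic $(n,\eps)$-trees, shift invariance and weak$^*$-approximation (the ``obstacle'' formalism), starting from $f_\omega\in d^{\omega}_{1/2}(B_{\ell_1([0,\omega])})$ in Lemma~\ref{l:omega}. But two devices that actually close the induction are missing from your sketch. First, to reach the levels $\omega^{1+\alpha}\cdot n$ one must compound derivations along isometric copies; this is Proposition~\ref{proposition:fundamental}: if $T^*$ maps $Z^*$ isometrically onto $Y\subset X^*$ and $B_{X^*}\cap Y\subset d_\eps^\alpha(B_{X^*})$ (verified on extreme points via Krein--Milman), then $d_\eps^\beta(B_{Z^*})$ is carried into $d_\eps^{\alpha+\beta}(B_{X^*})$. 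A tree construction alone does not provide this composition of derivations. Second, you need to convert what the construction yields into $\dent\ge\omega^{1+\alpha+1}$; the paper stops at $f_{\omega^{\omega^\alpha}}\in d^{\omega^{1+\alpha}}_{1/2}$ and invokes the fact that the weak$^*$-dentability index of an Asplund space is always a power of $\omega$. Two smaller corrections: the scale $\eps=2$ cannot work, because a separation-$2$ tree in $B_{\ell_1}$ cannot branch past a point mass (an extreme point is not the midpoint of two ball elements at distance $2$), which is why the paper works at scale $1/2$; and the hard construction is only needed for $\alpha<\omega$, since for $\alpha\ge\omega$ one has $1+\alpha=\alpha$ and $\dent\ge\szlenk=\omega^{\alpha+1}$ already gives the lower bound.
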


\begin{proof}
We start by proving  the upper estimate

\begin{equation}\label{i:inequality1}
\dent(C([0,\omega^{\omega^\alpha}])) \leq \omega^{1+\alpha+1},
\end{equation}

The method of the proof is similar to \cite{HL}, where a short and direct
computation of the Szlenk index of the spaces $C([0,\alpha])$ is presented.
Next lemma is a variant of Lemma 2.2. from~\cite{HL}. We omit the proof which
requires only minor notational changes.

\begin{lem}\label{l:homog}
Let $X$ be a Banach space and $\alpha$ an ordinal. Assume that
\[
\forall \varepsilon>0 \quad \exists\delta(\varepsilon)>0 \quad
d_\varepsilon^\alpha(B_{X^*}) \subset (1-\delta(\varepsilon))B_{X^*}.
\]
Then 
\[
\dent(X) \leq \alpha\cdot\omega.
\]
\end{lem}

We shall also use the following Lemma that can be found in \cite{L}.

\begin{lem}\label{l:szlenkL2} Let $X$ be a Banach space and $L_2(X)$ be the Bochner
space $L_2([0,1],X)$. Then
\[
\dent(X) \le \szlenk(L_2(X)).
\]
\end{lem}

Thus, in order to obtain the desired upper bound we only need to
prove the following.

\begin{prop}\label{p:upperszlenkL2} Let $0\le \alpha <\omega_1$. Then
$\szlenk(L_2(C([0,\omega^{\omega^\alpha}])))\le \omega^{1+\alpha+1}$.
\end{prop}

\begin{proof} For a fixed $\alpha<\omega_1$ and $\gamma<
\omega^{\omega^\alpha}$, let us put
$Z=L_2(\ell_1([0,\omega^{\omega^\alpha})))$, together with the
\ws-topology induced by $L_2(C_0([0,\omega^{\omega^\alpha}]))$ and
$Z_\gamma=L_2(\ell_1([0,\gamma]))$ with the \ws-topology induced by
$L_2(C([0,\gamma]))$. We recall that for a Banach space $X$ with
separable dual, $L_2(X^*)$ is canonically isometric to $(L_2(X))^*$.

Let $P_\gamma$ be the canonical projection from
$\ell_1([0,\omega^{\omega^\alpha}))$ onto $\ell_1([0,\gamma])$.
Then, for $f\in Z$ and $t\in [0,1]$, we define $(\Pi_\gamma
f)(t)=P_\gamma(f(t))$. Clearly, $\Pi_\gamma$ is a norm one
projection from $Z$ onto $Z_\gamma$ (viewed as a subspace of $Z$).
We also have that for any $f\in Z$, $\|\Pi_\gamma f -f\|$ tends to
$0$ as $\gamma$ tends to $\omega^{\omega^\alpha}$.

Next is a variant of Lemma 3.3 in~\cite{HL}.

\begin{lem}\label{l:subspaces}
Let $\alpha<\omega_1$, $\gamma< \omega^{\omega^\alpha}$, $\beta <
\omega_1$ and $\varepsilon>0$. If $z \in
s_{3\varepsilon}^\beta(B_{Z})$ and $\norm{\Pi_\gamma
z}^2>1-\varepsilon^2$, then $\Pi_\gamma z \in
s_\varepsilon^\beta(B_{Z_\gamma})$.
\end{lem}

\begin{proof}
We will proceed by transfinite induction in $\beta$. The cases
$\beta=0$ and $\beta$ a limit ordinal are clear. Next we assume that
$\beta=\mu+1$ and the statement has been proved for all ordinals
less than or equal to $\mu$.
 Consider
$f \in B_Z$ with $\norm{\Pi_\gamma f}^2>1-\varepsilon^2$ and
$\Pi_\gamma f \notin s_\varepsilon^\beta(B_{Z_\gamma})$. Assuming $f
\notin s_{3\varepsilon}^\mu(B_{Z}) \supset
s_{3\varepsilon}^\beta(B_{Z})$ finishes the proof, so we may suppose
that $f \in s_{3\varepsilon}^\mu(B_{Z})$. By the inductive
hypothesis, $\Pi_\gamma f \in s_{\varepsilon}^\mu(B_{Z_\gamma})$.
Thus there exists a \ws-neighborhood $V$ of $f$ such that the
diameter of $V\cap s_{\varepsilon}^\mu(B_{Z_\gamma})$ is less than
$\varepsilon$. We may assume that $V$ can be written
$V=\bigcap_{i=1}^k H(\varphi_i,a_i)$, where $a_i\in \mathbb{R}$ and
$\varphi_i\in L_2(C([0,\gamma]))$. We may also assume, using
Hahn-Banach theorem, that $V\cap
(1-\varepsilon^2)^{1/2}B_{Z_\gamma}=\emptyset$.

Define $\Phi_i \in L_2(C_0([0,\omega^{\omega^\alpha}))$ by
$\Phi_i(t)(\sigma)=\varphi_i(t)(\sigma)$ if $\sigma\le \gamma$ and
$\Phi_i(t)(\sigma)=0$ otherwise. Then define $W=\bigcap_{i=1}^k
H(\Phi_i,a_i)$. Note that for $f$ in $Z$, $f\in W$ if and only if
$\Pi_\gamma f \in V$. In particular $W$ is a \ws-neighborhood of
$f$. Consider now $g,g' \in W\cap s_{3\varepsilon}^\mu(B_Z)$. Then
$\Pi_\gamma g$ and $\Pi_\gamma g'$ belong to $V$ and therefore they have norms greater than $(1-\varepsilon^2)^{1/2}$. It follows from the
induction hypothesis that $\Pi_\gamma g, \Pi_\gamma g' \in s^\mu_\varepsilon(B_{Z_\gamma})$ thus $\|\Pi_\gamma g -\Pi_\gamma g'\| \le
\varepsilon$. Since $\|\Pi_\gamma g\|^2>1-\varepsilon^2$ and
$\|g\|\le 1$, we also have $\|g-\Pi_\gamma g\| <\varepsilon$. The
same is true for $g'$ and therefore $\|g-g'\|<3\varepsilon$. This
finishes the proof of the Lemma.
\end{proof}

We are now in position to prove Proposition \ref{p:upperszlenkL2}.
For that purpose it is enough to show that for all
$\alpha<\omega_1$:

\begin{equation}\label{e:enough}
\forall \gamma<\omega^{\omega^\alpha} \quad \forall \varepsilon>0
\quad s_\varepsilon^{\omega^{1+\alpha}}(B_{Z_\gamma})= \emptyset.
\end{equation}
We will prove this by transfinite induction on $\alpha<\omega_1$.

For $\alpha=0$, $\gamma$ is finite and the space
$Z_\gamma$ is isomorphic to $L_2$ and therefore
$s_\varepsilon^\omega(B_{Z_\gamma})$ is empty. So \eqref{e:enough}
is true for $\alpha=0$.

Assume that \eqref{e:enough} holds for $\alpha<\omega_1$.
Let $Z=L_2(C_0([0,\omega^{\omega^\alpha}]))$. It follows from
Lemma~\ref{l:subspaces} and the fact that for all $f\in Z$
$\|\Pi_\gamma f -f\|$ tends to $0$ as $\gamma$ tends to
$\omega^{\omega^\alpha}$, that
\[
\forall \varepsilon >0 \quad s_\varepsilon^{\omega^{1+\alpha}}(B_Z)
\subset (1-\varepsilon^2)^{1/2}B_Z.
\]
From this and Lemma~\ref{l:homog} it follows that
\[
\forall \varepsilon >0 \quad
s_\varepsilon^{\omega^{1+\alpha+1}}(B_Z)=\emptyset.
\]
By Theorem~\ref{l:class} we know that the spaces $C([0,\gamma])$,
$C([0,\omega^{\omega^\alpha}])$, and also
$C_0([0,\omega^{\omega^\alpha}])$ are isomorphic, whenever
$\omega^{\omega^\alpha} \leq \gamma < \omega^{\omega^{\alpha+1}}$.
Thus $s_\varepsilon^{\omega^{1+\alpha+1}}(B_{Z_\gamma})=\emptyset$
for any $\varepsilon>0$ and $\gamma < \omega^{\omega^{\alpha+1}}$,
i.e. \eqref{e:enough} holds for  $\alpha+1$.

Finally, the induction is clear for limit ordinals.
\end{proof}

In the rest of the note, we will focus on proving the converse inequality. Note
that it suffices to deal with the spaces $C([0,\omega^{\omega^\alpha}])$ where
$\alpha<\omega$. Indeed, in case $\alpha\ge\omega$, our inequality
~\eqref{i:inequality1} implies that

$$
\dent(C([0,\omega^{\omega^\alpha}]))=\szlenk(C([0,\omega^{\omega^\alpha}])) =
\omega^{\alpha+1}.
$$

\begin{prop}\label{proposition:fundamental}
Let $X,Z$ be Banach spaces and let $Y \subset X^*$ be a closed subspace. Let
there be $T \in {\mathcal B}(X,Z)$ such that $T^*$ is an isometric isomorphism
from $Z^*$ onto $Y$.  Let $\varepsilon >0$, $\alpha$ be an ordinal such that
$B_{X^*}\cap Y \subset d_\varepsilon^\alpha(B_{X^*})$, and $z \in Z^*$. If $z
\in d_\varepsilon^\beta(B_{Z^*})$, then $T^*z \in
d_\varepsilon^{\alpha+\beta}(B_{X^*})$.
\end{prop}

\begin{proof}
By induction with respect to $\beta$. The cases when $\beta=0$ or $\beta$ is a
limit ordinal are clear. Let $\beta=\mu+1$ and suppose that $T^*z \notin
d_\varepsilon^{\alpha+\beta}(B_{X^*})$. If $z \notin
d_\varepsilon^\mu(B_{Z^*})$, then the proof is finished. So we proceed assuming
that
 $z \in d_\varepsilon^\mu(B_{Z^*})$,
which by the inductive hypothesis implies that $T^*z \in
d_\varepsilon^{\alpha+\mu}(B_{X^*})$. There exist $x \in X$, $t>0$, such that
$T^*z \in H(x,t) \cap d_\varepsilon^{\alpha+\mu}(B_{X^*})=S$ and $\diam S
<\varepsilon$. Consider the slice $S'=H(Tx,t) \cap d_\varepsilon^\mu(B_{Z^*})$.
We have $\duality{Tx,z}=\duality{x,T^*z}$, so $z \in S'$.
 Also, $\diam S'\leq\diam S<\varepsilon$ as $T^*$ is an isometry.
We conclude that $z \notin d_\varepsilon^\beta(B_{Z^*})$, which finishes the
argument.
\end{proof}

Let us introduce  a shift operator $\tau_m:
\ell_1([0,\omega])\to\ell_1([0,\omega])$, $m\in\mathbb{N}$, by letting $\tau_m
h(n)=h(n-m)$ for $n\geq m$, $\tau_m h(n)=0$ for $n<m$ and $\tau_m
h(\omega)=h(\omega)$.

\begin{cor}
Let $h \in d^\alpha_\varepsilon(B_{\ell_1([0,\omega])})$. Then $\tau_m h \in
d^\alpha_\varepsilon(B_{\ell_1([0,\omega])})$ for every $m \in \Natural$.
\end{cor}
\begin{proof}
Indeed, consider the mapping $T:C([0,\omega]) \to C([0,\omega])$ defined  as

$T((x(0),x(1),\ldots,x(\omega)))=(x(1),x(2),\ldots,x(\omega))$. Clearly,
$T^*=\tau_1$ and the assertion for $m=1$ follows by the previous proposition.
For $m>1$ one may use induction.
\end{proof}

\begin{defn}
Let $\alpha$ be an ordinal and $\varepsilon>0$. We will say that a subset $M$
of $X^*$ is an \emph{$\varepsilon$-$\alpha$-obstacle} for $f \in B_{X^*}$ if
\item{(i)} $\dist(f,M)\geq\varepsilon$,
\item{(ii)} for every $\beta < \alpha$ and every $w^*$-slice $S$ of $d_\varepsilon^\beta(B_{X^*})$ with $f \in S$ we have
$S \cap M \neq \emptyset$.
\end{defn}
It follows by transfinite induction that if $f$ has an
$\varepsilon$-$\alpha$-obstacle, then $f \in d_\varepsilon^\alpha(B_{X^*})$.

An \emph{$(n,\varepsilon)$-tree} in a Banach space $X$ is a finite
sequence $(x_i)_{i=0}^{2^{n+1}-1}\subset X$ such that
\[
x_i=\frac{x_{2i}+x_{2i+1}}{2} \mbox{ and } \norm{x_{2i}-x_{2i+1}}\geq
\varepsilon
\]
for $i=0,\ldots, 2^{n}-1$. The element $x_0$ is called the
\emph{root} of the tree $(x_i)_{i=0}^{2^{n+1}-1}$. Note that if
$(h_i)_{i=0}^{2^{n+1}-1}\subset B_{X^*}$ is an
$(n,\varepsilon)$-tree in $X^*$, then $h_0\in
d_\varepsilon^n(B_{X^*})$.

Define $f_\beta\in \ell_1([0,\alpha])$, for $\alpha \geq \beta$, by
$f_\beta(\xi)=1$ if $\xi=\beta$ and $f_\beta(\xi)=0$ otherwise.

\begin{lem}\label{l:omega}
\[f_\omega \in d^\omega_{1/2}(B_{\ell_1([0,\omega])})
\]
\end{lem}

\begin{proof}
In \cite[Exercise 9.20]{FHHMPZ}
 a sequence is constructed of $(n,1)$-trees in $B_{\ell_1([0,\omega])}$
 with roots
\[r_n=(\underbrace{\frac{1}{2^n},\ldots,\frac{1}{2^n}}_{2^n-times},0,\ldots)
\]
whose elements belong to ${\cal P}=\set{h\in
B_{\ell_1([0,\omega])}:\|h\|_1=1,\ h(n)\geq 0,\ h(\omega)=0}$. We
have $r_n \in d^{2n}_{1/2}(B_{\ell_1([0,\omega])})$, and
$\dist(f_\omega,{\cal P})=2$. Finally, for every $h \in {\cal P}$,
every $x \in C([0,\omega])$ and every $t \in \Real$ such that
$f_\omega \in H(x,t)$, there exists $m \in \Natural$ such that
$\tau_m h \in H(x,t)$. Therefore the set $\set{\tau_m r_n:(m,n) \in
\Natural^2}$ is an $\frac{1}{2}$-$\omega$-obstacle for $f_\omega$.
Thus $f_\omega \in d^\omega_{1/2}(B_{\ell_1([0,\omega])})$.
\end{proof}

\begin{prop}\label{prop:obstacle}
For every $\alpha < \omega$,
\begin{equation}\label{equation:nonempty}
f_{\omega^{\omega^\alpha}} \in
d_{1/2}^{\omega^{1+\alpha}}(B_{\ell_1([0,\omega^{\omega^\alpha}])})
\end{equation}
\end{prop}

\begin{proof}
The case $\alpha=0$ is contained in Lemma~\ref{l:omega}. Let us suppose that we
have proved the assertion~\eqref{equation:nonempty} for all ordinals (natural
numbers, in fact) less than or equal to $\alpha$. It is enough to show, for
every $n \in \Natural$, that
\begin{equation}\label{equation:n-times}
f_{\left(\omega^{\omega^\alpha}\right)^n} \in d_{1/2}^{\omega^{1+\alpha}n}
(B_{\ell_1([0,\left(\omega^{\omega^\alpha}\right)^n])}).
\end{equation}
Indeed, \eqref{equation:n-times} implies
\[
f_{\left(\omega^{\omega^\alpha}\right)^n} \in
d_{1/2}^{\omega^{1+\alpha}n}(B_{\ell_1([0,\omega^{\omega^{\alpha+1}}])}).
\]
Since $f_{(\omega^{\omega^\alpha})^n} \stackrel{w^*}{\longrightarrow}
f_{\omega^{\omega^{\alpha+1}}}$ and
$\norm{f_{(\omega^{\omega^\alpha})^n}-f_{\omega^{\omega^{\alpha+1}}}}=2$, we
see that $\{f_{\left(\omega^{\omega^\alpha}\right)^n}: n\in \Natural\}$ is an
$\frac{1}{2}$-$\omega^{1+\alpha+1}$-obstacle for
$f_{\omega^{\omega^{\alpha+1}}}$. That implies~\eqref{equation:nonempty} for
$\alpha+1$.

In order to prove \eqref{equation:n-times} we will proceed by induction. The
case $n=1$ follows from the inductive hypothesis as indicated above, so let us
suppose that $n=m+1$ and $\eqref{equation:n-times}$ holds for $m$.

\noindent  Define the mapping $T: C([0,(\omega^{\omega^\alpha})^n]) \rightarrow
C([0,\omega^{\omega^\alpha}])$ by
\[
\begin{array}{rl}
(Tx)(\gamma)=x((\omega^{\omega^\alpha})^m(1+\gamma)),\, \gamma \leq
\omega^{\omega^\alpha}
\end{array}
\]
A simple computation shows that the dual map $T^*$ is given by
\[
\begin{array}{rl}(T^*g)(\gamma)=
\begin{cases}
g(\xi), \mbox{ if } \gamma=(\omega^{\omega^\alpha})^m(1+\xi),\, \xi\leq \omega^{\omega^\alpha}\\
0 \mbox{ otherwise}
\end{cases}
\end{array}
\]
Clearly, $T^*$ is an isometric isomorphism of
$\ell_1([0,\omega^{\omega^\alpha}])$ onto $\rng T^*$. We claim that

\begin{equation}\label{equation:suba}B_{\ell_1([0,(\omega^{\omega^\alpha})^n])}\cap \rng T^*
\subset d_{1/2}^{\omega^{1+\alpha}m}
(B_{\ell_1([0,(\omega^{\omega^\alpha})^n])}).
\end{equation}

Note that the set of extremal points of
$B_{\ell_1([0,(\omega^{\omega^\alpha})^n])}\cap\rng T^*$ satisfies

$$
\text{ext}(B_{\ell_1([0,(\omega^{\omega^\alpha})^n])}\cap\rng
T^*)\subset\{f_\gamma,-f_\gamma:
\gamma=(\omega^{\omega^\alpha})^m(1+\xi),\, \xi\leq
\omega^{\omega^\alpha}\}
$$
By the inductive assumption and by symmetry,
$f_{(\omega^{\omega^\alpha})^m}$ and
$-f_{(\omega^{\omega^\alpha})^m}$ belong to
$d_{1/2}^{\omega^{1+\alpha}m}
(B_{\ell_1([0,(\omega^{\omega^\alpha})^n])})$. It is easy to see
that more generally, $f_\gamma$ and $-f_\gamma$ belong to
$d_{1/2}^{\omega^{1+\alpha}m}
(B_{\ell_1([0,(\omega^{\omega^\alpha})^n])})$, whenever
$\gamma=(\omega^{\omega^\alpha})^m(1+\xi),\, \xi\leq
\omega^{\omega^\alpha}$. Thus we have verified that

\[\text{ext}(B_{\ell_1([0,(\omega^{\omega^\alpha})^n])}
\cap \rng T^* )\subset d_{1/2}^{\omega^{1+\alpha}m}
(B_{\ell_1([0,(\omega^{\omega^\alpha})^n])}),\] and the claim
~\eqref{equation:suba} follows using the Krein-Milman theorem.

This together with the inductive assumption
\eqref{equation:nonempty} allows us to
 apply Proposition~\ref{proposition:fundamental}
(with $\ell_1([0,(\omega^{\omega^\alpha})^n])$ as $X^*$,
$C([0,\omega^{\omega^\alpha}])$ as $Z$, and $\rng T^*$ as $Y$) to get
\[f_{(\omega^{\omega^\alpha})^n}=T^*f_{\omega^{\omega^\alpha}} \in d_{1/2}^{\omega^{1+\alpha}n}
(B_{\ell_1([0,(\omega^{\omega^\alpha})^n])}).\]
\end{proof}

To finish the proof of Theorem~\ref{t:main}, we use that for every Asplund
space $X$,
 $\dent(X)=\omega^\xi$ for some ordinal $\xi$
(see~\cite[Proposition 3.3]{L}, \cite{HMVZ}). Combining
Proposition~\ref{prop:obstacle} with \eqref{i:inequality1} we obtain
\[\dent(C([0,\omega^{\omega^\alpha}]))= \omega^{1+\alpha+1}\] for $\alpha < \omega$. For
$\omega\le\alpha<\omega_1$, we use that
$\omega^{1+\alpha+1}=\omega^{\alpha+1}=\szlenk(C([0,\omega^{\omega^\alpha}])) =
\dent(C([0,\omega^{\omega^\alpha}]))$, which finishes the proof.
\end{proof}

Our next proposition is a direct consequence of Theorem
\ref{t:main}, Lemma \ref{l:szlenkL2} and Proposition
\ref{p:upperszlenkL2}.

\begin{prop} Let $0\le \alpha <\omega_1$. Then
$\szlenk(L_2(C([0,\omega^{\omega^\alpha}])))=\omega^{1+\alpha+1}$.
\end{prop}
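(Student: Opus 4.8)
The plan is to obtain the asserted value by squeezing $\szlenk(L_2(C([0,\omega^{\omega^\alpha}])))$ between two bounds that have already been established, so that no new computation is required. Write $X=C([0,\omega^{\omega^\alpha}])$ for brevity; since $\alpha<\omega_1$, the ordinal interval $[0,\omega^{\omega^\alpha}]$ is a countable compact and hence $X$ is a separable Asplund space with separable dual $X^*=\ell_1([0,\omega^{\omega^\alpha}])$. This guarantees the canonical isometric identification $(L_2(X))^*=L_2(X^*)$ that underlies both Lemma~\ref{l:szlenkL2} and Proposition~\ref{p:upperszlenkL2}, so both results may legitimately be invoked for this particular $X$.

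For the upper bound I would simply quote Proposition~\ref{p:upperszlenkL2}, which already asserts $\szlenk(L_2(X))\le\omega^{1+\alpha+1}$ for every countable $\alpha$. For the lower bound I would chain Lemma~\ref{l:szlenkL2} with the main theorem: Lemma~\ref{l:szlenkL2} yields $\dent(X)\le\szlenk(L_2(X))$, while Theorem~\ref{t:main} identifies $\dent(X)=\omega^{1+\alpha+1}$, so that $\omega^{1+\alpha+1}\le\szlenk(L_2(X))$. Putting the two inequalities together forces the equality $\szlenk(L_2(X))=\omega^{1+\alpha+1}$, which is the claim.

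There is no genuine computational obstacle here, since the real work has been absorbed into Theorem~\ref{t:main} (supplying the dentability lower bound via the obstacle construction of Proposition~\ref{prop:obstacle}) and into Proposition~\ref{p:upperszlenkL2} (supplying the Szlenk upper bound for the Bochner space). The only points requiring care are the verification of hypotheses rather than any estimate: one must confirm that the separable-dual condition needed for $(L_2(X))^*=L_2(X^*)$ holds, as noted above, and that Proposition~\ref{p:upperszlenkL2} was proved in the full range $0\le\alpha<\omega_1$ and not merely for $\alpha<\omega$ — which it was, its transfinite induction running over all countable $\alpha$. With these checks in place the argument is immediate.
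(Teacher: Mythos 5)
Your proof is correct and is exactly the paper's argument: the paper states this proposition as a direct consequence of Theorem~\ref{t:main}, Lemma~\ref{l:szlenkL2} and Proposition~\ref{p:upperszlenkL2}, which is precisely the squeeze $\omega^{1+\alpha+1}=\dent(X)\le\szlenk(L_2(X))\le\omega^{1+\alpha+1}$ you describe. Your added hypothesis checks (separable dual, full range of $\alpha$) are sound but not needed beyond what the cited results already provide.
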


Our main result can be extended to the non separable case as
follows.

\begin{thm} Let $0\le \alpha <\omega_1$. Let $K$ be a compact space
whose Cantor derived sets satisfy $K^{\omega^\alpha} \neq \emptyset$
and $K^{\omega^{\alpha+1}}=\emptyset$. Then
$\dent(C(K))=\omega^{1+\alpha+1}$.
\end{thm}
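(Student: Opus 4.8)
The plan is to prove the two inequalities separately, reducing everything to the metrizable case already settled in Theorem~\ref{t:main} and Proposition~\ref{p:upperszlenkL2}. Throughout I would use that a scattered compact $K$ makes $C(K)$ an Asplund space and that $C(K)^{*}=M(K)=\ell_1(K)$ isometrically, every measure on a scattered compact being purely atomic. The hypothesis $K^{\omega^\alpha}\neq\emptyset$, $K^{\omega^{\alpha+1}}=\emptyset$ says exactly that $K$ is scattered with $\omega^\alpha<\eta(K)\le\omega^{\alpha+1}$.

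For the lower bound $\dent(C(K))\ge\omega^{1+\alpha+1}$, I would first extract a countable closed subset $L\subset K$ carrying the full Cantor--Bendixson rank $\omega^\alpha$ at one of its points. Since $K^{\omega^\alpha}\neq\emptyset$, a standard transfinite construction produces a countable closed $L\subset K$ with $L^{\omega^\alpha}\neq\emptyset$; because $L\subset K$ we automatically have $L^{\omega^{\alpha+1}}\subset K^{\omega^{\alpha+1}}=\emptyset$, so $\omega^\alpha<\eta(L)\le\omega^{\alpha+1}$, and by Theorem~\ref{l:class} $C(L)$ is isomorphic to $C([0,\omega^{\omega^\alpha}])$. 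Hence $\dent(C(L))=\omega^{1+\alpha+1}$ by Theorem~\ref{t:main}. Now the restriction map $R\colon C(K)\to C(L)$ is a quotient map (by Tietze), so $R^{*}$ is a weak$^*$-to-weak$^*$ isometric isomorphism of $M(L)$ onto the subspace $Y=R^{*}M(L)\subset M(K)$. Applying Proposition~\ref{proposition:fundamental} with $X=C(K)$, $Z=C(L)$, $T=R$ and the trivial choice of ordinal $0$ (for which $B_{C(K)^{*}}\cap Y\subset d_\varepsilon^{0}(B_{C(K)^{*}})$ is automatic), I obtain $R^{*}\bigl(d_\varepsilon^\beta(B_{C(L)^{*}})\bigr)\subset d_\varepsilon^\beta(B_{C(K)^{*}})$ for every $\beta$ and every $\varepsilon>0$. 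Therefore $\dent(C(K),\varepsilon)\ge\dent(C(L),\varepsilon)$, and taking the supremum over $\varepsilon$ gives $\dent(C(K))\ge\dent(C(L))=\omega^{1+\alpha+1}$.

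For the upper bound I would route through the Bochner space exactly as in the separable case, combining Lemma~\ref{l:szlenkL2} with the separable determination of the Szlenk index. By Lemma~\ref{l:szlenkL2} it suffices to show $\szlenk(L_2(C(K)))\le\omega^{1+\alpha+1}$. I would use the well-known fact that for an Asplund space the Szlenk index is the supremum of the Szlenk indices of its separable subspaces. Let $V\subset L_2(C(K))$ be separable; since Bochner-integrable functions are essentially separably valued, $V$ is contained in $L_2(W)$ for some separable subspace $W\subset C(K)$. The closed subalgebra of $C(K)$ generated by a countable dense subset of $W$ together with the constants has metrizable spectrum, so there is a metrizable quotient $q\colon K\to K_W$ with $W\subset q^{*}C(K_W)\cong C(K_W)$. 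As a continuous image of the scattered compact $K$, the space $K_W$ is scattered with $\eta(K_W)\le\eta(K)\le\omega^{\alpha+1}$, and being metrizable it is countable; hence by Theorem~\ref{l:class}, $C(K_W)$ is isomorphic to some $C([0,\omega^{\omega^\beta}])$ with $\beta\le\alpha$. By monotonicity of the Szlenk index under subspaces and by Proposition~\ref{p:upperszlenkL2},
\[
\szlenk(V)\le\szlenk(L_2(W))\le\szlenk(L_2(C(K_W)))\le\omega^{1+\beta+1}\le\omega^{1+\alpha+1}.
\]
Taking the supremum over all separable $V\subset L_2(C(K))$ yields $\szlenk(L_2(C(K)))\le\omega^{1+\alpha+1}$, and therefore $\dent(C(K))\le\omega^{1+\alpha+1}$. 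Together with the lower bound this gives $\dent(C(K))=\omega^{1+\alpha+1}$.

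The main obstacle is the passage between $K$ and the metrizable setting in both directions: on the lower-bound side, checking that the full rank $\omega^\alpha$ is realized inside a \emph{countable} closed subset $L$ (so that $C(L)$ lands in the correct Bessaga--Pe\l czy\'nski class); and on the upper-bound side, justifying the separable determination of the Szlenk index together with the description of separable subspaces of $C(K)$ as sitting inside $C(K_W)$ for metrizable scattered quotients $K_W$ of height at most $\eta(K)$. This separable reduction is precisely what lets us avoid re-running the transfinite induction of Proposition~\ref{p:upperszlenkL2}, whose bookkeeping is tied to the linear order of the ordinal intervals and would otherwise have to be recast in terms of the Cantor--Bendixson filtration of the non-metrizable $K$.
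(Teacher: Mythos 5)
Your upper bound is essentially correct and runs parallel to the paper's: the paper invokes the separable determination of the weak$^*$-dentability index directly and reduces to Theorem~\ref{t:main}, while you route through $\szlenk(L_2(C(K)))$, Lemma~\ref{l:szlenkL2} and Proposition~\ref{p:upperszlenkL2}; the ingredients you need there (essential separable-valuedness of Bochner functions, the subalgebra trick, and the fact that a continuous surjection $f\colon K\to M$ of compacta satisfies $M^{(\beta)}\subseteq f(K^{(\beta)})$, so Cantor--Bendixson height cannot increase) are all true and standard. The genuine gap is in the lower bound, in the step ``a standard transfinite construction produces a countable closed $L\subset K$ with $L^{\omega^\alpha}\neq\emptyset$''. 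No such standard construction exists, and the inductive principle it implicitly relies on --- if $K^{(\beta)}\neq\emptyset$ then some countable compact $L\subseteq K$ has $L^{(\beta)}\neq\emptyset$ --- is false, already for $\beta=2$. Concretely, let $\mathcal{A}=\{A_i\}_{i\in I}$ be an infinite (hence uncountable) maximal almost disjoint family of infinite subsets of $\omega$, let $\Psi(\mathcal{A})=\omega\cup\mathcal{A}$ be the Mr\'owka--Isbell space, and let $K=\Psi(\mathcal{A})\cup\{\infty\}$ be its one-point compactification. Then $K$ is scattered with $K''=\{\infty\}\neq\emptyset$ and $K'''=\emptyset$, yet no countable compact $L\subseteq K$ has $L''\neq\emptyset$: since $L^{(\delta)}\subseteq K^{(\delta)}\cap L$, such an $L$ would have $\infty\in L''$, so $L'$ would contain infinitely many points $A_i$, each a limit point of $B:=L\cap\omega$, i.e. $\abs{A_i\cap B}=\infty$; closedness of $L$ forces every $A_j$ with $\abs{A_j\cap B}=\infty$ into $L$, so $J:=\set{i:\abs{A_i\cap B}=\infty}$ is countably infinite; but then $\set{A_i\cap B: i\in J}$ is a countably infinite maximal almost disjoint family on the countable set $B$ (maximality on $B$ follows from maximality of $\mathcal{A}$), and no countably infinite MAD family exists. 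This example satisfies the hypotheses of the theorem with $\alpha=0$, where your extraction only needs a convergent sequence and is harmless; what it shows is that the successor-step gluing (sequences converging to the $y_n$'s, unioned with $\set{z}$) need not be closed and cannot in general be repaired. For $\alpha\geq 1$ your $L$ must in particular satisfy $L''\neq\emptyset$, so your construction would have to overcome exactly this obstruction by exploiting the stronger hypothesis $K^{(\omega^\alpha)}\neq\emptyset$; no standard argument does this, and I do not know whether the extraction statement is true at all.

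This is precisely the difficulty the paper's proof is engineered to avoid: it never takes subsets of $K$. Instead it quotes the estimate $\szlenk(C(K))\geq\omega^{\alpha+1}$, which is proved directly on the non-metrizable $K$ via Dirac measures carried by the derived sets (\cite{La2}, or Proposition 7 in \cite{L}); then the separable determination of the Szlenk index yields a separable subspace $X\subseteq C(K)$ with $\szlenk(X)\geq\omega^{\alpha+1}$; finally, the closed subalgebra of $C(K)$ generated by $X$ is isometric to $C(L)$ with $L$ a metrizable compact --- automatically scattered, hence countable, and a continuous \emph{image} of $K$ rather than a subset. Bessaga--Pe\l czy\'nski and Theorem~\ref{t:main} then give $\dent(C(L))\geq\omega^{1+\alpha+1}$, and subspace monotonicity of the dentability index ($C(L)\subseteq C(K)$) concludes. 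Your Tietze/quotient mechanism (Proposition~\ref{proposition:fundamental} with the ordinal taken to be $0$) is correct as far as it goes, but it is applied to an object you cannot produce; replacing the topological extraction by the functional-analytic separable reduction above repairs the proof and is exactly what the paper does.
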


\begin{proof} The upper estimate follows from the separable
determination of the weak$^*$-dentability index when it is countable
and from Theorem \ref{t:main} (the argument is identical to the one
given for the computation of Sz$(C(K))$ in \cite{La2}).

On the other hand, since $K^{\omega^\alpha} \neq \emptyset$, we have
that $\szlenk(C(K))\ge \omega^{\alpha+1}$ (see \cite{La2} or
Proposition 7 in \cite{L}). Therefore there is a separable subspace
$X$ of $C(K)$ such that $\szlenk(X)\ge \omega^{\alpha+1}$. By
considering the closed subalgebra of $C(K)$ generated by $X$, we may
as well assume that $X$ is isometric to $C(L)$, where $L$ is a
compact metrizable space. Since $\szlenk(C(L))\ge
\omega^{\alpha+1}$, it follows from Theorem \ref{t:main} that
$\dent(C(L))\ge \omega^{1+\alpha+1}$ and finally that
$\dent(C(K))\ge \omega^{1+\alpha+1}$.
\end{proof}

\end{document}